\newcommand{\Z}{\mathbb{Z}}
\newcommand{\R}{\mathbb{R}}
 \def\beq{\begin{eqnarray}}
\def\eeq{\end{eqnarray}}
\newtheorem{theorem}{Theorem}
\newtheorem{lemma}{Lemma} 
\definecolor{pink}{rgb}{1,0,1}
\title{Three's company in six dimensions:  irreducible, isospectral, non-isometric flat tori}
\author{Gustav M\aa rdby}\address{Department of Mathematical Sciences \\Chalmers University of Technology and The University of Gothenburg\\SE-41296, Gothenburg} \email{mardby@chalmers.se}
\author{Julie Rowlett}
\address{Department of Mathematical Sciences \\Chalmers University of Technology and The University of Gothenburg\\SE-41296, Gothenburg}\email{julie.rowlett@chalmers.se}
\author{Felix Rydell}\address{Department of Mathematics, Royal Institute of Technology, 
Lindstedtsvägen 25, 114 28 Stockholm, Sweden}\email{felix.rydell@gmail.com}
\begin{document}

\begin{abstract}
In 1964, John Milnor, using a construction of two lattices by Witt, produced the first example of two flat tori that are not globally isometric and whose Laplacians for exterior forms have the same sequence of eigenvalues.  The aforementioned flat tori are sixteen-dimensional.  One is reducible while the second is irreducible.  In the ensuing years, pairs of non-isometric flat tori that share a common Laplace spectrum have been shown to exist in dimensions four and higher.  In dimensions three and lower, Alexander Schiemann proved in 1994 that any flat tori that are isospectral are in fact isometric, so four is the lowest dimension in which such pairs exist. Using a four-dimensional such pair, one can easily construct an eight-dimensional such triplet. However, triplets of mutually non-isometric 
flat tori that share a common Laplace spectrum in dimensions 4, 5, 6, and 7 have eluded researchers - until now.  We present here the first example. 
\end{abstract}

\maketitle

How does geometry influence physics?  In other words, to what extent does an object's shape influence its physical properties?  This question is not only the focus of a vast amount of scientific research but is also important to numerous tangible situations, such as green building, aerodynamic design, medical imaging, and drug delivery.  A quintessential mathematical object in all of these contexts as well as further physical processes including quantum phenomena,  heat conduction, vibrations, sound, electromagnetism, and diffusion is the Laplace operator. The spectrum of this operator plays an essential role in all of these contexts and physical processes.  This motivates our quest to understand the intricate interplay between the Laplace spectrum - and therewith numerous physical processes - and geometry.

The simplest example of how geometry influences the Laplace spectrum is conveyed by music.  The beautiful (or terrible, depending on the player) sound of a piano is created by the vibration of its strings.  The ends of each string are held fixed, whereas the rest of the string vibrates, producing its sound. Given an initial position and velocity of a specific string, its position at later times is obtained by solving the wave equation.  This can then be used to deduce information about the sounds produced by the vibrating string.  One method for solving the wave equation is to first obtain all standing waves and then superimpose these using the Fourier coefficients obtained from the initial data (the initial position and velocity).  We obtain these standing waves mathematically by solving the one-dimensional Helmholtz equation and therewith calculating the eigenvalues of the Laplace operator for our string.  The collection of these eigenvalues is the Laplace spectrum of our string.  When we calculate this Laplace spectrum, we obtain the length of the string. 
If we identify a string with a circle of the same length, we can therefore prove that if two circles have the same Laplace spectrum, then they have the same length. Equivalently, if two one-dimensional flat tori share a common Laplace spectrum, then they are identical.  

Of course a real string is three-dimensional, and indeed the cross-section of the string also has important implications for the sounds it produces.  While it is not possible to analytically compute the Laplace spectrum of an arbitrarily shaped, realistic three-dimensional string, it is possible to analytically compute the Laplace spectrum of a \em flat torus \em of any dimension.    A flat torus is a compact Riemannian manifold obtained as a quotient of flat Euclidean space by a full-rank lattice.  Let $\R^n$ denote the Euclidean space of dimension $n$, and let $\Z^n \subset \R^n$ be the set of elements of $\R^n$ whose components are all integers.  Let $M$ be an invertible $n \times n$ matrix with real-valued entries.  A \em full-rank lattice \em is a set $M \Z^n :=\{Mz: z\in \Z^n\}.$  The matrix $M$ is a \em basis matrix.  \em  It is not unique, but all basis matrices are related in the sense that every other basis matrix can be expressed as $MB$ for some unimodular matrix $B$. 
For a full-rank lattice $\Gamma \subset \R^n$, there is an associated full-rank lattice known as the dual, often denoted $\Gamma^*$.  This consists of all elements of $\R^n$ whose scalar product with any arbitrary element of $\Gamma$ is an integer.  It is then a classical exercise to prove that the Laplace spectrum of the flat torus is equal to the squares of the lengths of the elements of the dual lattice multiplied by a constant factor $4 \pi^2$.  By Poisson's summation theorem, to investigate the Laplace spectrum of a flat torus, it is equivalent to study the lengths of the vectors in full-rank lattices.  The set of lengths of vectors in a full-rank lattice, counted with multiplicity, is known as its \em length spectrum.  \em  In this way, the study of the Laplace spectrum, initially motivated by physical considerations, has lead to the study of a geometric quantity, the \em length spectrum \em of a full-rank lattice.  We may simply refer to this as the \em spectrum \em of the lattice, keeping in mind the equivalence between the Laplace spectrum of a flat torus and the length spectrum of the associated lattice.

Lattices are not only instrumental in the study of the Laplace spectrum, but also beautiful objects with utility in a myriad of contexts.  For example, they describe structures in biology, chemistry, and physics, are used in communication systems, can provide robust algorithms for post quantum encryption, and are essential objects in analytic number theory.  A closely related object that is also useful for problems in number theory is a quadratic form.  Quadratic forms are essential to many fields - not only number theory - but also differential geometry, data science, machine learning, optimization, and physics. Given an $n$-dimensional full-rank lattice $\Gamma$ with basis matrix $A$, let $Q = A^T A$.  Then, we identify $Q$ with the quadratic form that acts on $x \in \R^n$ via $Q(x) = x^T Q x$.  If $P$ is an $n \times n$ matrix with $B^T Q B = P$, for a unimodular matrix $B$, then we say that the quadratic form defined via $P$ in the analogous way is \em integrally equivalent \em to $Q$.  Since any other basis matrix for $\Gamma$ can be expressed as $AB$ for a unimodular matrix $B$, we therefore associate the class of integrally equivalent quadratic forms with the lattice $\Gamma$.  For $t \in \R$, the $t$-th \textit{representation number}, often denoted $R(Q, t)$, is the number of distinct $x \in \Z^n$ such that $Q(x)=t$.  There is a natural bijection between the length spectrum of the flat torus $\R^n / \Gamma$ and the representation numbers of this equivalence class.  For $x\in \mathbb{Z}^n$, the length of the lattice vector $\|Ax\|$ is mapped to $Q(x)=x^TA^TAx=\|Ax\|^2$.  In this way one can also see that the representation numbers of integrally equivalent quadratic forms are identical.  Consequently, two flat tori are isospectral if and only if the representation numbers of their associated quadratic forms are identical.

It is then natural to ask, how does the `shape' of a lattice, and therewith the shape of the associated flat torus, influence its Laplace spectrum?  Equivalently, to what extent can we identify a quadratic form if we know its representation numbers?  Perhaps the most fundamental question in this direction is:  if the Laplace spectra of two flat tori are identical, then do the flat tori have the same shape?  Mathematically, we would say that the flat tori have the same shape if they are isometric as Riemannian manifolds.  This is equivalent to the existence of an orthogonal transformation that takes the first lattice to the second lattice.  As mentioned above, it is a classical exercise to prove that if two one-dimensional flat tori have identical Laplace spectra, then they are isometric.  It is slightly more difficult to prove that this also holds in two dimensions, but is still a feasible task for anyone with a background in mathematics. The first example of two flat tori which are isospectral but \em not \em isometric is a 16-dimensional pair found by Milnor in 1964 \cite{milnor1964eigenvalues}. 

After Milnor's example was found, a question arose: what is the smallest dimension in which isospectral non-isometric flat tori exist?  In 1967 Kneser found a 12-dimensional pair \cite{kneser1967lineare}.  It took ten years for Kitaoka to find a pair in dimension 8 \cite{kitaoka1977positive} in 1977. Nine years later Conway and Sloane managed to find six- and five-dimensional examples \cite{conway1992four}. In 1990, Schiemann used a computer to find a four-dimensional example \cite{schiemann1990beispiel}. One year later, Shiota found another four-dimensional pair \cite{shiota1991theta}, and in the same year Earnest and Nipp found yet another pair \cite{earnest1991theta}. In 1992, Conway and Sloane found an infinite family of four-dimensional pairs \cite{conway1992four} \cite{cervino2011conway}. Then, in 1994 Schiemann showed, using an advanced computer algorithm now known as \textit{Schiemann's algorithm} \cite{nilsson2023isospectral}, that there are no pairs in three dimensions \cite{schiemann1994ternare, schiemann1997ternary}. Thus, the smallest dimension in which isospectral non-isometric flat tori exist is four. 

Now, we may wonder, how \em many \em flat tori can share a common Laplace spectrum?  In 1978, Wolpert showed that any collection of mutually isospectral non-isometric flat tori is finite \cite{wolpert1978eigenvalue}. Suwa-Bier improved this result in 1984 by showing that the supremum over the sizes of all such collections in any given dimension is also finite \cite{suwa1984positiv}.  However, it is not clear how to extract an explicit upper bound bound from  \cite{wolpert1978eigenvalue, suwa1984positiv}, and we are also unaware of a conjecture concerning the precise value of this supremum as a function of the dimension. As soon as one has pairs of isospectral non-isometric flat tori in a given dimension, one can begin constructing increasingly larger families of isospectral yet mutually non-isometric flat tori. For instance, if $(\Gamma,\Lambda)$ is Schiemann's four-dimensional pair of incongruent lattices with equal spectra, then 
\begin{align}
    (\Gamma\oplus \Gamma, \Gamma\oplus \Lambda, \Lambda\oplus \Lambda)
\end{align}
is an eight-dimensional triplet of incongruent lattices with equal spectra. Here, $\oplus$ denotes the direct sum of the canonical embeddings of the four-dimension lattices in the first four, respectively last four, coordinate directions of $\R^8$.   
To the best of our knowledge, no such triplets (or quadruplets) have been demonstrated in dimensions lower than eight.

\section*{Linear codes}
The main idea for this paper is to search for triplets among linear codes.  Let $q$ and $n$ be positive integers.  A \textit{linear code} is a subgroup of the additive group $(\Z/q\Z)^n$, and its elements are called \textit{codewords}. Every linear code of $(\Z/q\Z)^n$ has a so-called \textit{generator matrix}, which is a (not unique) $n \times n$ matrix whose rows consist of generators of the subgroup. Since any subgroup can be generated by $n$ or fewer elements of the group, in case the subgroup has less than $n$ generators, a generator matrix will have some rows containing only zeros. These rows are simply copies of the identity element of the group in order to create a square matrix.  For example, the subgroup consisting of only the identity element of the group has the generator matrix that is the $n \times n$ matrix with all entries equal to zero.  

To understand the relationship between linear codes and integer lattices, we define the projection map $\pi_q : \Z^n \to (\Z/q\Z)^n$ by 
 $   \pi_q(z) = z \mod q.$
Here, mod $q$ acts coordinate-wise and refers to the integers modulo $q$. For a lattice $L\subset \Z^n$, $\pi_q(L)$ is a linear code, and for a linear code $C$, $\pi_q^{-1}(C)$ is a lattice. Further, the following lemma shows that every lattice is the pre-image of at least one linear code. 

\begin{lemma}[\cite{nilsson2023isospectral}, Section 2.1 \& 3.3] \label{lemma:code_to_lattice} Let $L$ be a full-rank lattice in $\R^n$. Then
 $   L = \pi_q^{-1}(\pi_q(L)) $
if and only if $q\Z^n\subset L$.  If $L=M \Z^n$, then $\det(M) \Z^n \subset L$.
\end{lemma}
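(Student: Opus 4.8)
The plan is to prove both claims of the lemma. For the first, iff statement, I would establish the two directions separately, and for the second (the concrete consequence $\det(M)\Z^n \subset L$), I would apply the first by verifying the hypothesis with $q = \det(M)$.

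First I would prove the forward direction of the equivalence. Suppose $L = \pi_q^{-1}(\pi_q(L))$. I want to show $q\Z^n \subset L$. Take any $v \in q\Z^n$, so $v = qw$ for some $w \in \Z^n$. Then $\pi_q(v) = qw \bmod q = 0$, which is the identity codeword, and the identity is certainly an element of the code $\pi_q(L)$ (since $L$ is a lattice, hence contains $0$, and $\pi_q(0)=0$). Therefore $v \in \pi_q^{-1}(\pi_q(L)) = L$, giving $q\Z^n \subset L$. This direction is short and essentially a consequence of the fact that $q\Z^n$ is exactly the kernel of $\pi_q$.

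Next I would prove the reverse direction. Assume $q\Z^n \subset L$. The inclusion $L \subset \pi_q^{-1}(\pi_q(L))$ is automatic for any set, so the work is the reverse inclusion $\pi_q^{-1}(\pi_q(L)) \subset L$. Take $z \in \pi_q^{-1}(\pi_q(L))$; then $\pi_q(z) \in \pi_q(L)$, meaning there is some $\ell \in L$ with $\pi_q(z) = \pi_q(\ell)$, i.e.\ $z - \ell \in \ker \pi_q = q\Z^n$. By hypothesis $q\Z^n \subset L$, so $z - \ell \in L$, and since $\ell \in L$ and $L$ is closed under addition, $z = (z-\ell) + \ell \in L$. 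This establishes the equivalence. The conceptual crux of the whole lemma is the identification $\ker \pi_q = q\Z^n$, which makes both directions transparent.

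Finally, for the concrete statement, I would set $q = \det(M)$ (taking absolute value, or noting $\det M \ne 0$ since $M$ is invertible) and show $q\Z^n \subset L = M\Z^n$, after which the first part is not even needed for the stated conclusion. The key fact is Cramer's rule, or equivalently the cofactor/adjugate identity $M^{-1} = \frac{1}{\det M}\operatorname{adj}(M)$, where $\operatorname{adj}(M)$ has integer entries because $M$ does. Given $v \in \det(M)\Z^n$, write $v = \det(M)\, w$ with $w \in \Z^n$; then $M^{-1} v = \det(M) M^{-1} w = \operatorname{adj}(M)\, w \in \Z^n$, so $v = M(M^{-1}v) \in M\Z^n = L$. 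I expect the main obstacle here to be handling the sign/absolute value of the determinant cleanly (so that $q$ is a positive integer as required for the projection $\pi_q$ to make sense), but this is a minor bookkeeping point rather than a genuine difficulty; the essential content is the integrality of the adjugate matrix.
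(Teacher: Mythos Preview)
Your proof is correct and entirely standard. However, note that the paper does not actually supply a proof of this lemma: it is stated with a citation to \cite{nilsson2023isospectral} and used as a black box, so there is no in-paper argument to compare against. Your approach---identifying $\ker\pi_q = q\Z^n$ for the equivalence, and invoking the adjugate identity $M\cdot\operatorname{adj}(M)=\det(M)\,I$ for the inclusion $\det(M)\Z^n\subset M\Z^n$---is the natural one and matches what one finds in the cited reference.

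One small remark: the lemma as stated in the paper says ``full-rank lattice in $\R^n$,'' but for $\pi_q(L)$ to make sense one needs $L\subset\Z^n$ (equivalently, $M$ with integer entries), which you correctly assume implicitly when you say ``$\operatorname{adj}(M)$ has integer entries because $M$ does.'' You might make that hypothesis explicit in a final write-up. Also, your observation that the second claim does not actually require the first is correct; the two parts of the lemma are logically independent, with the second merely supplying a concrete $q$ for which the hypothesis of the first is satisfied.
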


For each $q$ and $n$, the set of linear codes is finite. We can therefore conduct a systematic search through integer lattices by generating all linear codes for $q=1,2,3,\ldots$.  We have developed and implemented an algorithm for this purpose.  As observed by Conway \cite[p. 40--42]{conway1997sensual}, one can deduce isospectrality of lattices by looking at weight distributions of codes.  We say that two linear codes have equal \em weight distributions \em if there is a bijection between codewords that preserves the codewords up to permutations and signs (modulo $q$).  For $q=2$, this is equivalent to the bijection preserving the number of non-zero elements.  It is straightforward to check that if $C_1$ and $C_2$ are linear codes in $(\Z/q\Z)^n$ with equal weight distributions, then $\pi_q ^{-1} (C_i)$, $i=1,2$ are isospectral lattices.

\section*{A six-dimensional triplet}
For $q = 5$ and $n = 6$ we found, using our algorithm, three linear codes $C_1, C_2, C_3$ with equal weight distributions. Their corresponding lattices are
\begin{equation} \label{eq:lattice_triple}
\begin{split} 
    L_1 &= \pi_{5}^{-1}(C_1) = \begin{bmatrix}1&0&0&0&0&0\\ 0&1&0&0&0&0\\ 0&0&1&0&0&0\\ 1&1&0&5&0&0\\2&0&1&0&5&0\\1&2&1&0&0&5\end{bmatrix}\Z^6, \\
    L_2 &= \pi_{5}^{-1}(C_2) = \begin{bmatrix}1&0&0&0&0&0\\ 0&1&0&0&0&0\\ 0&0&1&0&0&0\\ 2&1&0&5&0&0\\0&1&1&0&5&0\\3&2&1&0&0&5\end{bmatrix}\Z^6, \\
    L_3 &= \pi_{5}^{-1}(C_3) = \begin{bmatrix}1&0&0&0&0&0\\ 0&1&0&0&0&0\\ 0&0&1&0&0&0\\ 2&1&0&5&0&0\\0&1&1&0&5&0\\2&3&1&0&0&5\end{bmatrix}\Z^6.
\end{split}
\end{equation}
In the following sections, we will formally prove that these correspond to three mutually isospectral and non-isometric irreducible flat tori.

\section*{Isospectrality}
To show that the three lattices $L_i = A_i\Z^6$ in \eqref{eq:lattice_triple} correspond to three isospectral flat tori, we instead show the equivalent fact that the quadratic forms
\begin{equation} \label{eq:quad_forms}
\begin{split}
    Q_1 &= A_1^TA_1 = \begin{bmatrix}7&3&3&5&10&5\\ 3&6&2&5&0&10\\ 3&2&3&0&5&5\\ 5&5&0&25&0&0\\10&0&5&0&25&0\\5&10&5&0&0&25\end{bmatrix}, \\ 
    Q_2 &= A_2^TA_2 = \begin{bmatrix}14&8&3&10&0&15\\ 8&7&3&5&5&10\\ 3&3&3&0&5&5\\ 10&5&0&25&0&0\\0&5&5&0&25&0\\15&10&5&0&0&25\end{bmatrix}, \\ 
    Q_3 &= A_3^TA_3 = \begin{bmatrix}9&8&2&10&0&10\\ 8&12&4&5&5&15\\ 2&4&3&0&5&5\\ 10&5&0&25&0&0\\0&5&5&0&25&0\\10&15&5&0&0&25\end{bmatrix}
\end{split}
\end{equation}
have the same representation numbers.  This can be done using the following result, which follows from Hecke's identity theorem for modular forms.  Recall that a quadratic form $Q$ is called \textit{even} if every element in $Q$ is an integer and the diagonal elements are even. If $Q$ is even and positive definite, we define $N_Q$ to be smallest positive integer such that $N_QQ^{-1}$ is even.

\begin{theorem}[{\cite[Thm. 3.6]{nilsson2023isospectral}}] \label{lemma:isospectral_quadratic_forms}
Let $P$ and $Q$ be two even positive definite quadratic forms in $2k$ variables. Assume $\det(P) = \det(Q)$, $N_P = N_Q$, and that the $t$-th representation numbers of $P$ and $Q$ coincide for $0\le t\le \mu_0(N_P)k/6 + 2$, where  
\begin{equation}
        \mu_0(N) = N \prod_{p|N, \text{ prime }} \left(1 + \frac{1}{p}\right). \label{eq:mu0}
    \end{equation}
   Then all representation numbers for $P$ and $Q$ are the same. 
\end{theorem}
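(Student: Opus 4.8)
The plan is to prove the result as a consequence of Hecke's theory of modular forms attached to theta series of even positive definite quadratic forms, so the first task is to translate the representation-number data into a statement about the difference of two theta series. For an even positive definite quadratic form $Q$ in $2k$ variables, I would introduce its theta series $\theta_Q(\tau) = \sum_{x \in \Z^{2k}} e^{2\pi i \tau Q(x)/2}$, whose $t$-th Fourier coefficient is the representation number $R(Q,t)$. The classical fact, going back to Hecke and Schoeneberg, is that $\theta_Q$ is a modular form of weight $k$ for the congruence subgroup $\Gamma_0(N_Q)$ with a specific quadratic character (a Nebentypus $\chi$ determined by $\det Q$). Thus under the hypotheses $\det P = \det Q$ and $N_P = N_Q =: N$, the two theta series $\theta_P$ and $\theta_Q$ live in the \emph{same} finite-dimensional space $M_k(\Gamma_0(N), \chi)$ of modular forms.

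The heart of the argument is then a dimension-counting / Sturm-bound style estimate. Setting $f = \theta_P - \theta_Q$, this is a modular form of weight $k$ on $\Gamma_0(N)$ whose Fourier expansion is $\sum_t (R(P,t) - R(Q,t)) q^t$ where $q = e^{2\pi i \tau}$. The hypothesis says the first several coefficients of $f$ vanish: concretely $R(P,t) = R(Q,t)$ for all $0 \le t \le \mu_0(N)k/6 + 2$. The goal is to conclude $f \equiv 0$, which forces \emph{all} representation numbers to agree. The mechanism is the standard valence (Sturm) bound: a nonzero modular form of weight $w$ on a group of index $[\,\mathrm{SL}_2(\Z):\Gamma\,]$ has total number of zeros in a fundamental domain equal to $w/12$ times that index, so if $f$ vanishes to order exceeding this bound at the cusp $\infty$, it must vanish identically. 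Here the index of $\Gamma_0(N)$ is $\mu_0(N) = N\prod_{p \mid N}(1 + 1/p)$, and the weight is $k$, giving a valence bound of $\mu_0(N)\,k/12$. I would then observe that the vanishing threshold $\mu_0(N)k/6 + 2$ in the hypothesis comfortably exceeds $\mu_0(N)k/12$, so the valence inequality is violated unless $f$ is identically zero. The extra factor of two (writing $k/6$ rather than $k/12$) provides slack that absorbs the normalization conventions relating the quadratic-form variable $t$ to the modular variable and any contributions from the other cusps.

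The main obstacle, and the step requiring genuine care rather than routine bookkeeping, is establishing precisely that $\theta_P$ and $\theta_Q$ are modular forms \emph{for the same group with the same character}. This requires invoking the transformation law for theta series under $\Gamma_0(N)$, which depends on $N$ being exactly the level (the smallest integer with $N Q^{-1}$ even) and on the character $\chi$ being pinned down by the discriminant $\det Q$; the equality $\det P = \det Q$ together with $N_P = N_Q$ is exactly what guarantees the two forms are \emph{holomorphic} of the same weight, level, and Nebentypus and hence comparable. Since this transformation law is exactly the content of the cited reference \cite{nilsson2023isospectral}, and more classically of Hecke's and Schoeneberg's work, I would cite it rather than reprove it. The remaining steps — writing down the valence bound for $\Gamma_0(N)$ and checking the arithmetic inequality $\mu_0(N)k/6 + 2 > \mu_0(N)k/12$ — are then immediate.
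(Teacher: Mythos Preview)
Your approach is exactly the one the paper invokes: the theorem is not proved in the paper itself but cited from \cite{nilsson2023isospectral}, with the remark that it ``follows from Hecke's identity theorem for modular forms''---precisely the theta-series-plus-Sturm-bound argument you outline.

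One point deserves tightening. With your normalization $\theta_Q(\tau)=\sum_x e^{2\pi i\tau Q(x)/2}$, the $q$-expansion is $\sum_m R(Q,2m)\,q^m$, not $\sum_t R(Q,t)\,q^t$ as you wrote; the hypothesis $R(P,t)=R(Q,t)$ for $t\le \mu_0(N)k/6+2$ therefore forces vanishing of Fourier coefficients of $f$ up to index $m\le \mu_0(N)k/12+1$, which is exactly (not ``comfortably exceeding'') the Sturm bound $\mu_0(N)k/12$. Your later sentence about the factor of two absorbing ``normalization conventions'' hints that you see this, but the preceding sentence misstates the mechanism as a crude inequality $\mu_0(N)k/6+2>\mu_0(N)k/12$ with slack to spare. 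The paper explicitly flags this factor-of-two issue as a subtlety that was misstated in \cite[Cor.~3.7]{nilsson2023isospectral}, so it is worth getting the bookkeeping exactly right rather than waving at it.
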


We point out that a subtly misstated version of this result appears in \cite[Cor. 3.7]{nilsson2023isospectral}.  The salient point is that  to show that two even quadratic forms in $2k$ variables are isospectral, it is not enough to check that their representation numbers up to $\frac{\mu_0(N_P)k}{12} + 1$ are the same. Instead, one must check that the first $\frac{\mu_0(N_P)k}{12} + 1$ \textit{even} representation numbers, i.e. the representation numbers up to $2(\frac{\mu_0(N_P)k}{12} + 1)$, are the same.  The result can easily be extended odd dimensions, as explained in \cite[Section 3.2]{nilsson2023isospectral}.

\begin{theorem}
    The three quadratic forms $Q_i$ given by \eqref{eq:quad_forms} have the same representation numbers, hence the corresponding flat tori are isospectral.
\end{theorem}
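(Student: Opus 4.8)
The plan is to apply Theorem~\ref{lemma:isospectral_quadratic_forms}, which reduces isospectrality to a finite computation. The immediate difficulty is that the forms $Q_i$ are not even: although all their entries are integers, their diagonals contain the odd numbers $7, 3, 25, \dots$, so the hypotheses of Theorem~\ref{lemma:isospectral_quadratic_forms} do not apply directly. First I would therefore pass to the doubled forms $2Q_i$. Doubling turns every diagonal entry even while keeping all entries integral, so each $2Q_i$ is even, and it remains positive definite. Since $(2Q_i)(x) = 2\,Q_i(x)$, we have $R(2Q_i, t) = R(Q_i, t/2)$ for even $t$ and $R(2Q_i, t) = 0$ for odd $t$; hence the three forms $Q_1, Q_2, Q_3$ share all representation numbers if and only if the three even forms $2Q_1, 2Q_2, 2Q_3$ do. It therefore suffices to prove the statement for the $2Q_i$.

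Next I would verify the two global hypotheses of Theorem~\ref{lemma:isospectral_quadratic_forms} with $P = 2Q_i$ and $2k = 6$, so $k = 3$. Each basis matrix $A_i$ in \eqref{eq:lattice_triple} is lower triangular with diagonal $(1,1,1,5,5,5)$, so $\det(A_i) = 125$ and $\det(Q_i) = \det(A_i)^2 = 125^2$; consequently $\det(2Q_i) = 2^6 \cdot 125^2$ is the same for $i = 1,2,3$, establishing the equal-determinant condition. For the level condition I would compute the inverse $(2Q_i)^{-1}$ for each $i$ and determine $N_{2Q_i}$, the least positive integer making $N_{2Q_i}(2Q_i)^{-1}$ even; a direct calculation should show that this common value $N := N_{2Q_1} = N_{2Q_2} = N_{2Q_3}$ agrees across the three forms.

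With $N$ in hand, the theorem requires checking that the $t$-th representation numbers of the $2Q_i$ coincide for all $0 \le t \le \mu_0(N) k/6 + 2 = \mu_0(N)/2 + 2$. By the reduction above this is equivalent to verifying that $R(Q_1, s) = R(Q_2, s) = R(Q_3, s)$ for every integer $s$ up to $\tfrac12(\mu_0(N)/2 + 2)$, a finite set of values. I would carry this out by a direct count of integer vectors $x \in \Z^6$ with $Q_i(x) = s$ (equivalently, lattice vectors of $L_i$ of the corresponding squared length) for $s$ below the bound; this is the step requiring a large but routine computer verification. Once these finitely many representation numbers are seen to agree, Theorem~\ref{lemma:isospectral_quadratic_forms} yields that all representation numbers of the $2Q_i$ agree, hence so do those of the $Q_i$. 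Since isospectrality of the flat tori is equivalent to equality of the representation numbers of the associated quadratic forms, the three tori are then isospectral.

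The main obstacle I anticipate is not any single hard inequality but rather the correct bookkeeping around the evenness reduction and the precise form of the bound. In particular, as the authors caution in the discussion preceding the statement, one must use the threshold $\mu_0(N)k/6 + 2$, equivalently check even representation numbers up to $2(\mu_0(N)k/12 + 1)$, rather than the naive $\mu_0(N)k/12 + 1$; getting this right, together with an accurate computation of the level $N$, is where the argument is most delicate. As an independent sanity check one could also invoke the weight-distribution criterion recorded earlier: since $C_1, C_2, C_3$ were found to have equal weight distributions, signed coordinate permutations are isometries of $\Z^6$ fixing $5\Z^6$, so the coset theta series composing each $\pi_5^{-1}(C_i)$ match term by term and already force isospectrality. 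The Theorem~\ref{lemma:isospectral_quadratic_forms} route is nonetheless preferable here, as it produces a finite, directly verifiable certificate.
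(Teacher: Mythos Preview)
Your proposal is correct and follows essentially the same route as the paper: double the forms to make them even, verify the common determinant and level, and then invoke Theorem~\ref{lemma:isospectral_quadratic_forms} to reduce to a finite check of representation numbers carried out by computer. The paper makes the constants explicit ($\det(2Q_i)=10^6$, $N_{2Q_i}=100$, $\mu_0(100)=180$, bound $=92$) and tabulates the representation numbers, but otherwise the arguments coincide.
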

\begin{proof}
    While $Q_i$ are not even, $2Q_i$ are, and $Q_i$ have the same representation numbers if and only if $2Q_i$ do. Now, the reader may verify that $\det(2Q_i) = 10^6$ and $N_{2Q_i} = 100$, $i = 1, 2, 3$. Moreover, $\mu_0(100) = 180$ and $\mu_0(N_{2Q_i})3/6 + 2 = 92$, so it remains to show that $2Q_i$ have the same representation numbers up to 92. This follows from Tables \ref{Table1} to \ref{Table7}, which were obtained from a well-tested computer program \href{https://github.com/julierowlett/Isospectrality-and-Isometry-Check}{that can be accessed in this Github repository.}
\end{proof}

\begin{table}[h!]
\begin{center}
\caption{The representation numbers of $2Q_i$ between 0 and 16.}
\label{Table1}
\begin{tabular}{ |c|c|c|c|c|c|c|c|c|c| } 
 \hline
  $k$ & 0 & 2 & 4 & 6 & 8 & 10 & 12 & 14 & 16  \\ 
 \hline
 $R(2Q_i,k)$ & 1 & 0 & 0 & 2 & 2 & 2 & 2 & 10 & 8  \\ 
 \hline
\end{tabular}
\end{center}
\end{table}

\begin{table}[h!]
\begin{center}
\caption{The representation numbers of $2Q_i$ between 18 and 32.}
\label{Table2}
\begin{tabular}{ |c|c|c|c|c|c|c|c|c| } 
 \hline
  $k$ & 18 & 20 & 22 & 24 & 26 & 28 & 30 & 32  \\ 
 \hline
 $R(2Q_i,k)$ & 4 & 12 & 16 & 22 & 18 & 20 & 32 & 30  \\ 
 \hline
\end{tabular}
\end{center}
\end{table}

\begin{table}[h!]
\begin{center}
\caption{The representation numbers of $2Q_i$ between 34 and 46.}
\label{Table3}
\begin{tabular}{ |c|c|c|c|c|c|c|c| } 
 \hline
  $k$ & 34 & 36 & 38 & 40 & 42 & 44 & 46 \\ 
 \hline
 $R(2Q_i,k)$ & 34 & 46 & 52 & 48 & 28 & 78 & 102 \\ 
 \hline
\end{tabular}
\end{center}
\end{table}

\begin{table}[h!]
\begin{center}
\caption{The representation numbers of $2Q_i$ between 48 and 58.}
\label{Table4}
\begin{tabular}{ |c|c|c|c|c|c|c| } 
 \hline
  $k$ & 48 & 50 & 52 & 54 & 56 & 58 \\ 
 \hline
 $R(2Q_i,k)$ & 54 & 70 & 68 & 120 & 124 & 64 \\ 
 \hline
\end{tabular}
\end{center}
\end{table}

\begin{table}[h!]
\begin{center}
\caption{The representation numbers of $2Q_i$ between 60 and 70.}
\label{Table5}
\begin{tabular}{ |c|c|c|c|c|c|c| } 
 \hline
  $k$ & 60 & 62 & 64 & 66 & 68 & 70 \\ 
 \hline
 $R(2Q_i,k)$ & 104 & 124 & 160 & 112 & 110 & 184 \\ 
 \hline
\end{tabular}
\end{center}
\end{table}

\begin{table}[h!]
\begin{center}
\caption{The representation numbers of $2Q_i$ between 72 and 82.}
\label{Table6}
\begin{tabular}{ |c|c|c|c|c|c|c|c|c| } 
 \hline
  $k$ & 72 & 74 & 76 & 78 & 80 & 82 \\ 
 \hline
 $R(2Q_i,k)$ & 108 & 162 & 230 & 164 & 200 & 132 \\ 
 \hline
\end{tabular}
\end{center}
\end{table}

\begin{table}[h!]
\begin{center}
\caption{The representation numbers of $2Q_i$ between 84 and 92.}
\label{Table7}
\begin{tabular}{ |c|c|c|c|c|c| } 
 \hline
  $k$ & 84 & 86 & 88 & 90 & 92 \\ 
 \hline
 $R(2Q_i,k)$ & 220 & 366 & 202 & 170 & 236 \\ 
 \hline
\end{tabular}
\end{center}
\end{table}

\section*{Non-isometry}
Let $\Gamma_1 = A_1\Z^n$ and $\Gamma_2 = A_2\Z^n$ be two full-rank lattices in $\R^n$. The flat tori $\R^n/\Gamma_1$ and $\R^n/\Gamma_2$ are isometric as Riemannian manifolds if and only if the lattices $\Gamma_1, \Gamma_2$ are \textit{congruent}, meaning that $C\Gamma_1 = \Gamma_2$ for some orthogonal matrix $C \in O_n(\R)$. This holds if and only if the quadratic forms $Q_1 = A_1^TA_1$ and $Q_2 = A_2^TA_2$ are integrally equivalent.
 
A convenient way to check whether two quadratic forms are integrally equivalent uses the following result, which is a generalization of \cite[Cor. 3.3]{nilsson2023isospectral}.
\begin{lemma} \label{lemma:IEQF}
    Let $Q_1, Q_2$ be two positive definite $n$-dimensional quadratic forms. Let $\lambda_{\min}$ be the smallest eigenvalue of $Q_1$. If $B^TQ_1B = Q_2$ for some unimodular matrix $B$ with columns $b_j$, then 
    \begin{equation}
        b_i^TQ_1b_j = (Q_2)_{ij}, \,\, i,j = 1,\dots,n.
    \end{equation}
    Moreover, $\|b_j\|^2 \leq (Q_2)_{jj}/\lambda_{\min}$ for each $j = 1,\dots,n$.
\end{lemma}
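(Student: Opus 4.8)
The plan is to prove the two assertions separately, both of which follow quickly from elementary linear algebra once the defining relation $B^TQ_1B = Q_2$ is unpacked.

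First I would establish the identity $b_i^TQ_1b_j = (Q_2)_{ij}$ by a direct entrywise expansion of the matrix product. Writing $B = [\,b_1 \mid b_2 \mid \cdots \mid b_n\,]$ so that its columns are the $b_j$, the $i$-th row of $B^T$ is exactly $b_i^T$. Hence the $(i,j)$ entry of $B^TQ_1B$ is
\begin{equation}
    (B^TQ_1B)_{ij} = b_i^T Q_1 b_j,
\end{equation}
and equating this with $(Q_2)_{ij}$ gives the first claim. The only point requiring care here is the bookkeeping of rows versus columns, to ensure the indices land in the stated order; since $Q_1$ is symmetric this order is in any case immaterial.

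For the norm bound, the key input is the Rayleigh quotient estimate for a symmetric positive definite matrix. Since $Q_1$ is symmetric it may be orthogonally diagonalized, $Q_1 = U^T D U$ with $U$ orthogonal and $D = \mathrm{diag}(\lambda_1,\dots,\lambda_n)$ its eigenvalues, all positive. For any $v \in \R^n$, setting $w = Uv$ and using $\|w\| = \|v\|$, one obtains
\begin{equation}
    v^TQ_1v = \sum_{k=1}^n \lambda_k w_k^2 \ge \lambda_{\min}\sum_{k=1}^n w_k^2 = \lambda_{\min}\|v\|^2.
\end{equation}
I would then specialize to $v = b_j$ and invoke the diagonal case $i=j$ of the identity just proved, namely $b_j^TQ_1b_j = (Q_2)_{jj}$, to get $\lambda_{\min}\|b_j\|^2 \le (Q_2)_{jj}$. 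Dividing by $\lambda_{\min} > 0$ yields the asserted inequality $\|b_j\|^2 \le (Q_2)_{jj}/\lambda_{\min}$.

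There is no genuinely hard step: the result is a two-line consequence of the Rayleigh bound together with reading off the diagonal entries of $B^TQ_1B$. If anything, the only thing worth double-checking is that $\lambda_{\min}$ is strictly positive, which is guaranteed by positive definiteness of $Q_1$, so that the final division is legitimate. I would also note that unimodularity of $B$ is not actually used in either conclusion; it is the integrality of the columns $b_j$, exploited elsewhere, that turns this bound into a finite search criterion, since it confines each $b_j$ to finitely many integer vectors inside a ball of the stated radius.
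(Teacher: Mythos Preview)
Your proof is correct. The paper itself does not supply a proof of this lemma; it merely introduces it as a generalization of a result from the cited survey and then uses it. Your argument---reading off the $(i,j)$ entry of $B^TQ_1B$ for the first claim, and applying the Rayleigh quotient lower bound $v^TQ_1v \ge \lambda_{\min}\|v\|^2$ with $v=b_j$ for the second---is exactly the natural one, and your remark that unimodularity plays no role in the lemma itself (only in the subsequent finiteness argument) is apt.
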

Since the elements in the unimodular matrix $B$ are integers, there are only finitely many such matrices satisfying the conditions in Lemma \ref{lemma:IEQF}. 

\begin{theorem}
    The three quadratic forms given by \eqref{eq:quad_forms} are not integrally equivalent, hence the corresponding flat tori are non-isometric.
\end{theorem}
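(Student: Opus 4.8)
The plan is to establish that the three forms are pairwise non-integrally-equivalent. Since integral equivalence is symmetric and we want the three forms to be mutually inequivalent, it suffices to rule out an equivalence for each of the three unordered pairs $(Q_1,Q_2)$, $(Q_1,Q_3)$, and $(Q_2,Q_3)$. For a fixed ordered pair $(Q_a,Q_b)$, suppose toward a contradiction that a unimodular matrix $B$ with columns $b_1,\dots,b_6$ satisfies $B^TQ_aB=Q_b$. By Lemma \ref{lemma:IEQF}, the columns must satisfy the Gram identities $b_i^TQ_ab_j=(Q_b)_{ij}$ for all $i,j$, and in particular each column obeys the norm bound $\|b_j\|^2\le (Q_b)_{jj}/\lambda_{\min}(Q_a)$, where $\lambda_{\min}(Q_a)$ is the smallest eigenvalue of $Q_a$.

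First I would compute $\lambda_{\min}(Q_a)$ for $a=1,2,3$ (equivalently, the square of the smallest singular value of $A_a$), which turns the norm bound into an explicit radius. Each candidate column $b_j$ is then forced to lie among the finitely many integer vectors inside the corresponding ball; even more restrictively, the diagonal Gram identity $b_j^TQ_ab_j=(Q_b)_{jj}$ means $b_j$ must be an integral representation of $(Q_b)_{jj}$ by the form $Q_a$, so the candidate set for each column is exactly the finite set of such representation vectors.

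Next I would carry out a finite search, building $B$ column by column: I would keep only those tuples of candidate columns that satisfy all off-diagonal Gram identities $b_i^TQ_ab_j=(Q_b)_{ij}$, backtracking as soon as a partial configuration violates one, and finally testing $\det B=\pm 1$. The claim is that for each of the three pairs this search terminates with no admissible $B$, which certifies non-equivalence: the conditions of Lemma \ref{lemma:IEQF} are necessary, so the absence of any $B$ meeting them rules out integral equivalence. This is precisely the computation performed by the program in the linked repository.

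The main obstacle is controlling the size of the search. The norm bounds coming from the largest diagonal entries (which are $25$) together with a potentially small $\lambda_{\min}(Q_a)$ could in principle admit many candidate vectors per column, and naively iterating over all $6$-tuples would be infeasible. The key to making the search tractable—and to trusting its exhaustiveness—is that the off-diagonal Gram conditions prune the tree aggressively and that the per-column candidate sets reduce to genuine representation vectors. The remaining delicate point is to confirm rigorously that the enumeration over integer vectors within each ball is complete, so that ``no admissible $B$ found'' constitutes a proof rather than mere numerical evidence.
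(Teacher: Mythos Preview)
Your proposal is correct and follows essentially the same route as the paper: invoke Lemma~\ref{lemma:IEQF} to bound the columns of a putative unimodular $B$, enumerate the finitely many integer vectors satisfying the diagonal Gram constraints, and exhaustively verify by computer that no tuple meets all off-diagonal constraints $b_i^TQ_ab_j=(Q_b)_{ij}$ for each pair. The paper carries this out explicitly for $(Q_1,Q_2)$ with $\lambda_{\min}(Q_1)=\tfrac{263}{400}$ and then appeals to the same computation for the remaining two pairs, exactly as you outline.
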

\begin{proof}
    The smallest eigenvalue of $Q_1$ is $\frac{263}{400}$. Thus, if $B^TQ_1B = Q_2$ for some unimodular matrix $B$ with columns $b_j$, then by Lemma \ref{lemma:IEQF} we have
    \begin{align}  \begin{aligned}  
        \|b_1\|^2 &\leq \frac{5600}{263}, \, &\|b_2\|^2 &\leq \frac{2800}{263}, \,& \|b_3\|^2 & \leq \frac{1200}{263}, \\
        \|b_4\|^2 &\leq \frac{10000}{263}, \, &\|b_5\|^2 &\leq \frac{10000}{263}, \,&\|b_6\|^2 &\leq \frac{10000}{263}.
    \end{aligned}
    \end{align}
    However, using a well-tested computer program \href{https://github.com/julierowlett/Isospectrality-and-Isometry-Check}{located in this Github repository}, we find that no such matrix $B$ satisfies $b_i^TQ_1b_j = (Q_2)_{ij}$ for all $i,j$. Therefore $Q_1$ and $Q_2$ are not integrally equivalent. The result follows similarly for $Q_1, Q_3$ and $Q_2, Q_3$. 
\end{proof}

\section*{Irreducibility}
A lattice $\Gamma$ is called \textit{reducible} if it is the direct sum of two lower-dimensional lattices $\Gamma_1$ and $\Gamma_2$. We write this as $\Gamma = \Gamma_1 \oplus \Gamma_2$. If a lattice is not reducible, we say it is \textit{irreducible}. We would like to know if our 6-dimensional triplet given by \eqref{eq:lattice_triple} consists of reducible or irreducible lattices.  A motivation for this question is the fact that many of the examples of isospectral non-isometric flat tori have been produced using one reducible lattice and one irreducible lattice, including Milnor's 16-dimensional pair \cite{milnor1964eigenvalues} and Conway's 6-dimensional pair \cite{conway1997sensual}.

\begin{theorem}
    The three lattices $L_i$ given by \eqref{eq:lattice_triple} are irreducible.
\end{theorem}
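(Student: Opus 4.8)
The plan is to prove irreducibility by working with the quadratic forms $Q_i = A_i^T A_i$ rather than the lattices directly, since $\Gamma = \Gamma_1 \oplus \Gamma_2$ is an orthogonal decomposition and orthogonality of lattice vectors is detected by vanishing off-diagonal entries in the Gram matrix of a suitable basis. A lattice $\Gamma$ is reducible precisely when there exists a basis in which the Gram matrix is block-diagonal, equivalently when the set of minimal (or short) vectors of $\Gamma$ splits into two mutually orthogonal, nonempty subsets that each span a rational subspace, and together span the whole space. So the first step is to fix a notion of the short vectors: I would compute the set of lattice vectors $v = A_i x$ with $\|v\|^2 = Q_i(x)$ below some modest threshold (the minimal norm, and a few of the next shortest norms), using the representation-number data already assembled in Tables~\ref{Table1} through \ref{Table7} together with the explicit enumeration from the same computer program used in the non-isometry proof.

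Next I would build the \emph{orthogonality graph} on these short vectors: place an edge between two short vectors whenever their inner product $v^T w = x^T Q_i y$ is \emph{nonzero}. A theorem of Eichler (see also Kneser's decomposition) guarantees that a root lattice, and more generally any lattice generated by its minimal vectors, is irreducible if and only if this graph on the minimal vectors is connected. The key point is that any orthogonal direct-sum decomposition $\Gamma = \Gamma_1 \oplus \Gamma_2$ forces every short vector to lie entirely in $\Gamma_1$ or entirely in $\Gamma_2$ (a short vector with nonzero components in both summands would be strictly longer than its projections), and vectors in different summands are orthogonal; hence connectivity of the orthogonality graph rules out any nontrivial splitting. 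So the plan reduces to: enumerate short vectors, form the orthogonality graph, and verify it is connected and that the short vectors span $\R^6$.

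The one subtlety I must address is that a lattice need \emph{not} be generated by its minimal vectors, so connectivity of the graph on minimal vectors alone does not immediately finish the argument. I would handle this by enlarging the vector set: include enough short vectors so that they generate a finite-index sublattice $\Gamma'$ of $\Gamma$ of full rank, and argue that if $\Gamma = \Gamma_1 \oplus \Gamma_2$ then $\Gamma' = (\Gamma' \cap \Gamma_1) \oplus (\Gamma' \cap \Gamma_2)$ inherits the same orthogonal decomposition; connectivity of the orthogonality graph on a generating set of short vectors then forces one of the two summands to be trivial. Checking that the chosen short vectors span $\R^6$ is a routine rank computation, and checking connectivity is a routine graph traversal, both of which I would delegate to the same verified code.

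The main obstacle I anticipate is twofold. First, I must choose the norm threshold large enough that the short vectors genuinely generate a full-rank sublattice (not merely span $\R^6$ over $\Q$, but generate up to finite index in a way compatible with the decomposition argument); picking this threshold too small could leave a gap where a reducible decomposition is not excluded. Second, and more delicate, is making the logical step from graph connectivity to irreducibility fully rigorous: I need the clean lemma that in any orthogonal decomposition each short vector lies in a single summand, which rests on the inequality $\|v\|^2 = \|v_1\|^2 + \|v_2\|^2$ for the orthogonal projections $v_1, v_2$ onto the summands, together with the fact that nonzero lattice vectors have norm bounded below by the minimum. Establishing these two points carefully is where the real content lies; once they are in place, the verification for each of $L_1, L_2, L_3$ is a finite computation carried out by the program, entirely analogous in spirit to the finite search used in the non-isometry proof above.
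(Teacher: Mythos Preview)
Your proposal is correct and takes essentially the same approach as the paper: both rest on the observation that sufficiently short vectors must lie entirely in one summand of any orthogonal decomposition (via $\|v\|^2 = \|v_1\|^2 + \|v_2\|^2$), and then use non-orthogonality relations to force enough short vectors into a single summand to span $\R^6$. The paper executes this by hand, listing explicit successive shortest vectors $v_1,\dots,v_6$ and bootstrapping the bound on $\min(U)$ inductively as each $v_k$ is placed in $V$, while your version repackages the same mechanism as connectivity of an orthogonality graph checked by computer; the inductive bootstrapping you flag as the ``main obstacle'' is exactly what the paper makes explicit.
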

\begin{proof}
    We show that $L_1$ is irreducible. The proof for $L_2$ and $L_3$ is similar. Suppose $L_1$ is reducible, so that $L_1 = U \oplus V$ for some sublattices $U$ and $V$ of dimension less than six. Then every vector in $U$ is orthogonal to every vector in $V$. 
    
    Now, using a computer, we find that the shortest non-zero vectors in $L_1$ are $v_1^{\pm} = \pm(0,0,1,0,1,1)$. For simplicity, we write $v_1$ to indicate $v_1 ^+$, and use the analogous notation for $v_i$ below for $i=2,3,4,5,6$ as well as $w_4$.
    The vectors $v_1^{\pm}$ are in either $U$ or $V$ since a sum of a non-zero vector in $U$ and $V$ has length strictly greater than $\|v_1\|$. Assuming $v_1 \in V$, we then note that the shortest non-zero vectors in $L_1$ apart from $v_1^{\pm}$ are $v_2^{\pm} = \pm (1,0,-1,1,1,0)$. While $v_1$ and $v_2$ are orthogonal, they do in fact belong to the same sublattice $V$. To see this, note that the shortest non-zero vectors in $L_1$ apart from $v_1^{\pm}$ and $v_2^{\pm}$ are $v_3^{\pm}= \pm (0,1,-1,1,-1,1)$. If $v_1 \in V$ and $v_2 \in U$, then $v_3$ is neither in $U$ nor in $V$ since $v_3$ is not orthogonal to $v_1$ or $v_2$. So, $v_3 = u' + v'$ for some $u' \in U$ and $v' \in V$. Then
    \begin{equation}
        5 = \|v_3\|^2 = \|u'\|^2 + \|v'\|^2 \geq \|v_2\|^2 + \|v_1\|^2 = 7,
    \end{equation}
    which is a contradiction. It follows that both $v_1$ and $v_2$ are in $V$. Then $v_3$ is not in $U$, because it is not orthogonal to $v_1$ and $v_2$. If it were a sum of non-zero elements in $U$ and $V$, its length would be strictly greater than the shortest vector in $U$. This is a contradiction, because this shortest vector is at least of length $\|v_3\|$. Therefore $v_3\in V$.

    Next, the shortest vectors in $L_1$ which are linearly independent of $v_1, v_2, v_3$ are $v_4^{\pm} = \pm (2,-1,0,1,-1,0)$ and $w_4^{\pm}= \pm (1,-1,1,0,-2,0)$. Since $v_4, w_4$ are not orthogonal to $v_1$, it follows by the same argument as before that $v_4, w_4 \in V$. Similarly, the shortest vectors in $L_1$ linearly independent of $v_1, v_2, v_3, v_4$ are $v_5^{\pm} = \pm (1,0,1,2,1,-1)$, which again are in $V$ because they are not orthogonal to $v_1$. Finally, the shortest vectors in $L_1$ which are linearly independent of $v_1,v_2,v_3,v_4,v_5$ are $v_6^{\pm}= \pm(2,1,1,-2,0,0)$, which are also in $V$ since they are not orthogonal to $v_1$. Therefore, we have six linearly independent vectors in the same sublattice, which contradicts that the sublattice has dimension less than six. We conclude that no such sublattices of $L_1$ exist.  
\end{proof}

\section*{Outlook}
For any $n \geq 1$, the $n$-th \textit{choir number} $\flat_n$ is defined as the maximum number $k$ such that there exist $k$ mutually isospectral and non-isometric $n$-dimensional flat tori. The name \em choir number \em is derived from the connection of Laplace spectra to music as detailed in the introduction. In 1990-1994, Schiemann proved that $\flat_1 = \flat_2 = \flat_3 = 1$ and $\flat_n \geq 2$ for $n \geq 4$ \cite{schiemann1990beispiel,schiemann1994ternare}.  Now, let $(\Gamma_1,\Gamma_2)$ be a pair of four-dimensional incongruent and isospectral lattices.  Then
\begin{align}
1\cdot \Gamma_{i_1}\oplus 2\cdot \Gamma_{i_2} \oplus \cdots \oplus n \cdot \Gamma_{i_n},
\end{align}
where $\lambda \cdot \Gamma$ denotes the scaling of the lattice $\Gamma$ by $\lambda$, are $4n$-dimensional, incongruent, and isospectral for any $i_j\in \{1,2\}$. The scalings are included to make sure that the lattices are incongruent, and they may be chosen arbitrarily as long as they are non-zero and distinct up to sign. Since they are $2^n$ in number, we conclude that $\flat_{4n} \geq 2^n$. Using our six-dimensional triplet, we similarly find that $\beta_{6n}\ge 3^n$. As $3^{1/6}>2^{1/4}$, this yields a tighter lower bound.  A similar argument shows that the choir numbers are supermultiplicative, in the sense that $\flat_{m+n} \geq \flat_m \flat_n$ for all $m, n \geq 1$, including $m=n$. Thus, the choir numbers grow faster than any polynomial.  On the other hand, the choir numbers are known to be finite \cite{wolpert1978eigenvalue, suwa1984positiv}, although we are not aware of an explicit bound for their values in terms of the dimension $n$.  The existence of a six-dimensional triplet implies that $\flat_6 \geq 3$.  We now conjecture that $\flat_4 = \flat_5 = 2$ and $\flat_6 = 3$.  We are currently working on a proof of this and investigating the values of the choir numbers in higher dimensions.


\end{document}